\DeclareSymbolFont{AMSb}{U}{msb}{m}{n}
\DeclareSymbolFontAlphabet{\Bbb}{AMSb}
\newtheorem{theorem}{Theorem}[section]
\newtheorem{lemma}[theorem]{Lemma}
\newtheorem{corollary}[theorem]{Corollary}
\newtheorem{proposition}[theorem]{Proposition}
\newcommand{\C}{\mathbb{C}}
\newcommand{\R}{\mathbb{R}}
\newcommand{\Z}{\mathbb{Z}}
\newcommand{\N}{\mathbb{N}}
\newcommand{\inv}{\mathcal{E}}
\newcommand{\Equiv}{\stackrel{\rightarrow}{\mathcal{E}}}
\newcommand{\equivM}{\stackrel{\leftrightarrow}{\mathcal{E}}}
\newcommand{\tr}{\mbox{tr}}
\DeclareMathOperator{\arccot}{arccot}
\newcommand{\ee}{\textrm{e}}
\begin{document}

\title[Local attractor for a $\Z_n$ symmetric map]{\bf A local but not global attractor for a $\Z_n$-symmetric map}
\author[Alarc\'on, Castro,  Labouriau]{B. Alarc\'on\and  S.B.S.D. Castro  \and I.S. Labouriau}

\address{B. Alarc\'on --- CMUP; Rua do Campo Alegre, 687; 4169-007 Porto; Portugal ---
permanent address:
Departament of Mathematics. University of Oviedo; Calvo Sotelo s/n; 33007 Oviedo; Spain}
\email{alarconbegona@uniovi.es}
\address{ S.B.S.D. Castro --- A CMUP and FEP.UP; Rua Dr. Roberto Frias; 4200-464 Porto; Portugal}
\email{sdcastro@fep.up.pt}
\address{ I.S. Labouriau ---  CMUP and FCUP; Rua do Campo Alegre, 687; 4169-007 Porto; Portugal}
\email{islabour@fc.up.pt}

\maketitle

\begin{abstract}

There are many tools for studying local dynamics.
An important problem is how this information can be used to obtain global information.
We present examples for which local stability does not carry on globally. To this purpose  we construct, for any natural $n\ge 2$,  planar maps whose symmetry group is $\Z_n$  having a local attractor that is not a global attractor. The construction starts from an example with symmetry group $\Z_4$. We show that although this example has codimension $3$ as a $\Z_4$-symmetric map-germ, its relevant dynamic properties are shared by two $1$-parameter families in its universal unfolding.
The same construction can be applied to obtain examples that are also dissipative.
The symmetry of these  maps forces them to  have  rational rotation numbers.

\end{abstract}

\section{Introduction}

At the end of the $19^{th}$ century, Lyapunov \cite{L} related the local stability of an equilibrium point to the eigenvalues of the Jacobian matrix of the vector field at that point. This led to the Markus-Yamabe Conjecture \cite{MY} in the 1960's, and fifteen years later to a version for maps of the original conjecture, using the relation between stability of fixed points and the eigenvalues of the Jacobian matrix of the map at that point \cite{LS}. In the 1990's, this was named, by analogy, the Discrete Markus-Yamabe Conjecture and remains unproven. It may be stated as follows:

{\sc Discrete Markus-Yamabe Conjecture: }
Let $f$ be a $C^1$ map from $\R^m$ to itself such that $f(0)=0$. If all the eigenvalues of the Jacobian matrix at every point have modulus less than one, then the origin is a global attractor.

It is known that the original conjecture holds for $m=2$ and is,
in this case, equivalent to the injectivity of the vector field
\cite{G}, \cite{F}. It is false for $m>2$ \cite{BL}, \cite{CEGHM}.
On the other hand, the Discrete Markus-Yamabe Conjecture holds,
for all $m$, if the Jacobian matrix of the map is triangular and,
additionally for $m=2$, for polynomial maps \cite{Cima-Manosa}. It
is false in higher dimensions, also for polynomial maps
\cite{CEGHM}. There exists a counter-example for $m=2$ that is an injective rational map (\cite{Cima-Manosa}).
This striking difference between the discrete and
continuous versions encouraged the study of the dynamics of
continuous and injective maps of the plane that satisfy the
hypotheses of the Discrete Markus-Yamabe Conjecture. This is now
known as the Discrete Markus-Yamabe Problem. From the results in
\cite{Alarcon}, it follows that the Discrete Markus-Yamabe Problem
is true for $m=2$ for dissipative maps, by introducing as an extra
condition the existence of an invariant ray (a continuous curve
without self-intersections connecting the origin to infinity). An
invariant ray can be, for instance  an axis of symmetry.

In the presence of symmetry, that is, when the map is equivariant, the ultimate question can be stated as follows:

{\sc Equivariant Discrete Markus-Yamabe Problem: }
Let $f:\R^2\longrightarrow\R^2$ be a dissipative $C^1$ equivariant planar map such that $f(0)=0$. Assume that all eigenvalues of the Jacobian matrix at every point have modulus less than one. Is the origin a global attractor?

Given the results in Alarc\'on {\em et al.} \cite{Alarcon}, the Equivariant Discrete Markus-Yamabe Problem is true if the group of symmetries of $f$ contains a reflection. In this case, the fixed-point space of the reflection plays the role of the invariant ray. This situation is addressed in Alarc\'on {\em et al.} \cite{ACL_global}. In the present paper, we are concerned with symmetry groups that do not contain a reflection.

The Equivariant Discrete Markus-Yamabe Problem has a negative answer if the
reflection is not a group element. In fact, the example
constructed by Szlenk and reported in \cite{Cima-Manosa} satisfies
all the hypotheses of the Discrete Markus-Yamabe Problem,   is
equivariant (as we show here) under the standard action of $\Z_4$,
but the origin is not a global attractor. Indeed, there is an
orbit of period $4$ and the rotation number defined in \cite{ORN} is $\frac{1}{4}$.
The example has a singularity at the origin with $\Z_4$ codimension 3,
and we show that two inequivalent 1-parameter families in its unfolding share
these dynamic properties.

We use Szlenk's example to construct  differentiable maps on the plane with symmetry group $\Z_n$ for all $n\ge 2$.
Each example has an attracting fixed point at the origin and
 a periodic orbit of minimal period $n$ which prevents local dynamics to extend globally.
 The construction may be extended to one of the 1-parameter families mentioned above.

We adapt  $\Z_n$ symmetric example to make it dissipative. In that case its symmetry implies that the rotation number is rational. 
Implications of this fact are discussed in the final section.

\subsection{Equivariant Planar Maps}

The reference for the folllowing definitions and results is Golubitsky {\em et al.} \cite[chapter XII]{golu2}, to which we refer the reader interested in further detail.

Our concern is about groups acting linearly on  $\R^2$ and more particularly about the action of $\Z_n$, $n  \geq 2$ on $\R^2$.
Identifying $\R^2 \simeq \C$, the finite group $\Z_n$ is  generated by one element $R_n$, the rotation by $2\pi/n$ around the origin, with action given by
$$
R_n\cdot z= \ee^{2\pi i/n} z .
$$

A map $f: \R^2 \rightarrow \R^2$ is $\Z_n$-equivariant if
$$
f(\gamma x)=\gamma f(x) \;\;\; \forall \; \gamma \in \Z_n, \; x \in \R^2.
$$
We also say, if the above only holds for elements in $\Z_n$, that $\Z_n$ is the symmetry group of $f$.

Since most of our results depend on the existence of a unique fixed point for $f$, the following is a useful result.
\begin{lemma}
If $f$ is $\Z_n$-equivariant then $f(0)=0$.
\end{lemma}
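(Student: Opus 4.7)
The plan is to exploit the fact that for $n\ge 2$ the origin is the unique fixed point of the generator $R_n$ acting linearly on $\R^2$, and then to extract $f(0)=0$ directly from the equivariance identity. First, I would specialize the equivariance relation $f(\gamma x)=\gamma f(x)$ to the choice $\gamma=R_n$ and $x=0$. Since the action is linear, $R_n\cdot 0 = e^{2\pi i/n}\cdot 0 = 0$, so the left-hand side collapses to $f(0)$ while the right-hand side is $R_n\cdot f(0)$. This already forces $f(0)$ to be a fixed point of $R_n$.

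The second step is the trivial linear-algebra observation that $R_n$ has only $0$ as a fixed point. Writing $w=f(0)\in\C$, the equation $R_n\cdot w = w$ becomes $(e^{2\pi i/n}-1)w=0$, and since $n\ge 2$ gives $e^{2\pi i/n}\ne 1$, we conclude $w=0$, i.e.\ $f(0)=0$.

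There is no real obstacle here: the lemma is a one-line consequence of equivariance combined with linearity of the $\Z_n$-action, and the only place the hypothesis $n\ge 2$ enters is in guaranteeing $e^{2\pi i/n}\ne 1$ so that the origin is genuinely the only fixed point of the rotation.
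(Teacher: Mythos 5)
Your proof is correct and follows essentially the same approach as the paper: apply equivariance with $\gamma = R_n$ at $x=0$ to get $f(0)=R_nf(0)$, then note that $e^{2\pi i/n}\ne 1$ for $n\ge 2$ forces $f(0)=0$.
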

\begin{proof}
We have $f(0)=f(\gamma 0)=\gamma f(0)$, by equivariance. The element $\gamma =\exp{2\pi i/n}$ of $\Z_n$ is such that $\gamma x \neq x$ for all $x\neq 0$. It then follows that $f(0)=0$.
\end{proof}

\section{Example with an orbit of period $4$}

In this section, we explore the properties of an example of a local attractor which is not global since it has an orbit of period $4$. This example is due to Szlenk and is reported in \cite{Cima-Manosa}. A list of properties for this example is given in Proposition \ref{Szlenk}. We divide this section in two subsections, the first dealing with dynamic properties and the second concerned with the study of the singularity in Szlenk's map.

\subsection{Dynamics}

Before introducing the example it is useful to establish some concepts that will be used in the proofs to come.
Let $S_{1,n}\subset\R^2$ be the open sector
$$
S_{1,n}=\left\{(x,y)=(r\cos\theta,r\sin\theta):\ 0<\theta<2\pi/n\right\}
$$
and define $S_{j,n}$, $j=2,\cdots,n$  recursively by
$S_{j,n}=R_n\left(S_{j-1,n}\right)$. Then
$\R^2=\bigcup_{j=1}^n\overline{S_{j,n}}$, where $\overline{A}$ is
the closure of $A$. Moreover, $S_{1,n}=R_n\left(S_{n,n}\right)$.
Then each $\overline{S_{j,n}}$ is a {\em fundamental domain} for the action of $\Z_n$, in particular
if $f:\R^2\longrightarrow\R^2$ is $\Z_n$-equivariant then $f$ is completely determined by its restriction to
$\overline{S_{j,n}}$.

 A {\em line ray} is a half line through the origin, of the form
$\{t(\alpha ,\beta ) :\quad t\ge 0\}$, with
$0\ne(\alpha,\beta)\in\R^2$.

The next Proposition establishes the relevant properties of Szlenk's example that will be used in the construction of other $\Z_n$-equivariant maps in the next section.

\begin{proposition}[Szlenk's example]\label{Szlenk}
Let $F_4:\R^2\longrightarrow\R^2$ be defined by
$$
F_4(x,y) = \left(-\frac{ky^3}{1+x^2+y^2}, \frac{kx^3}{1+x^2+y^2}\right)
\qquad
\mbox{for}\quad
1<k<\frac{2}{\sqrt{3}} .
$$
The map $F_4$ has  the following properties:
\begin{enumerate}
\renewcommand{\labelenumi}{{\theenumi})}
\item\label{HomeoSz}
$F_4$ is of class $C^1$.
\item\label{F4homeo}
$F_4$ is a homeomorphism.
\item\label{PtosFixosSzl}
$Fix(F_4)=\{0\}$.
\item\label{OriginalSz}
$F_4^4(P)=P$ for $P=\left((k-1)^{-1/2},0\right)$,
 with $F_4^j(P)=R_4^j(P)\ne P$ for $j=2,3$.
\item\label{LocAttSz}
$0$ is a local  attractor.
\item\label{EquivariantSz}
$F_4$ is $\Z_4$-equivariant.
\item\label{RaysInSzl}
The restriction of $F_4$ to any line ray is a homeomorphism onto another line ray.
\item\label{SzMapsSectors}
$F_4\left(\overline{S_{j,4}}\right)=\overline{S_{j+1,4}}$ for $j=1,\cdots,4\pmod{4}$ with
$F_4\left(\partial {S_{j,4}}\right)=\partial{S_{j+1,4}}$.
\item\label{astroSz}
The curve $F_4(\cos\theta,\sin\theta)$ goes across each line ray and is transverse to line rays at all points $\theta\ne \frac{m\pi}{2}$ for $m=0,1,2,3$.
\end{enumerate}
\end{proposition}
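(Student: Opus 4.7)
The strategy is to read $F_4$ in polar coordinates. Setting $(x,y) = (r\cos\theta, r\sin\theta)$ gives
\begin{equation}\label{eq:polarF4}
F_4(r\cos\theta, r\sin\theta) \;=\; \frac{kr^3}{1+r^2}\bigl(-\sin^3\theta,\ \cos^3\theta\bigr),
\end{equation}
which exhibits $F_4$ as a skew-product $(r,\theta)\mapsto(\rho(r,\theta),\phi(\theta))$ with $\rho(r,\theta)=kr^3\sqrt{\sin^6\theta+\cos^6\theta}/(1+r^2)$ and $\phi(\theta)$ the angle of the nowhere-vanishing vector $(-\sin^3\theta,\cos^3\theta)$. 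Almost every statement in the Proposition reduces to a simple property of $\rho$ or $\phi$.

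The first batch is direct inspection. For (\ref{HomeoSz}), $1+x^2+y^2\ge 1$, so the components are smooth rational functions. For (\ref{EquivariantSz}), using $R_4(x,y)=(-y,x)$, a one-line substitution shows $F_4(-y,x) = R_4 F_4(x,y) = (-kx^3,-ky^3)/(1+x^2+y^2)$. For (\ref{LocAttSz}), the cubic numerators force $DF_4(0)=0$, so every eigenvalue at the origin is zero and $0$ is locally asymptotically stable. For (\ref{PtosFixosSzl}), any fixed point with $r>0$ would require $(\cos\theta,\sin\theta)\parallel(-\sin^3\theta,\cos^3\theta)$, but the cross product equals $\cos^4\theta+\sin^4\theta>0$. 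For (\ref{OriginalSz}), with $a=(k-1)^{-1/2}$ we have $1+a^2=ka^2$, so $F_4(a,0)=(0,a)=R_4(a,0)$; equivariance then gives $F_4^j(P)=R_4^j P$ for every $j$, and since $R_4^j P\ne P$ for $j=1,2,3$, the point $P$ has minimal period $4$.

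For (\ref{RaysInSzl}), \eqref{eq:polarF4} shows that the ray at angle $\theta$ is sent onto the ray at angle $\phi(\theta)$; the radial map has derivative
$\partial\rho/\partial r = k\sqrt{\sin^6\theta+\cos^6\theta}\,r^2(3+r^2)/(1+r^2)^2,$
positive for $r>0$, with $\rho(0,\theta)=0$ and $\rho\to\infty$ as $r\to\infty$, hence a homeomorphism of $[0,\infty)$. For (\ref{SzMapsSectors}), one checks $\phi(0)=\pi/2$, $\phi(\pi/2)=\pi$, and that
$\phi'(\theta) = 3\sin^2\theta\cos^2\theta/(\sin^6\theta+\cos^6\theta) \ge 0$
with isolated zeros; so $\phi$ carries $[0,\pi/2]$ homeomorphically onto $[\pi/2,\pi]$, and equivariance propagates this to the remaining sectors. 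Combining (\ref{RaysInSzl}) and (\ref{SzMapsSectors}) yields (\ref{F4homeo}): $F_4$ restricts to a homeomorphism of each $\overline{S_{j,4}}$ onto $\overline{S_{j+1,4}}$, the pieces agree on shared boundary rays by continuity, and the pasted global inverse is continuous.

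Finally, for (\ref{astroSz}), apply \eqref{eq:polarF4} with $r=1$: the image of the unit circle is the astroid $p(\theta)=(k/2)(-\sin^3\theta,\cos^3\theta)$, with tangent $p'(\theta) = -(3k/2)\sin\theta\cos\theta(\sin\theta,\cos\theta)$. The tangent is proportional to $(\sin\theta,\cos\theta)$ while the position is proportional to $(-\sin^3\theta,\cos^3\theta)$; their cross product equals $\sin\theta\cos\theta$, vanishing precisely at $\theta=m\pi/2$, which are the same points where $p'=0$, so transversality to line rays holds elsewhere. That $p$ crosses every line ray follows from (\ref{SzMapsSectors}) since the unit circle meets all four sectors. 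The main delicacy I foresee is assembling (\ref{F4homeo}): although each sector restriction is a homeomorphism, the global gluing hinges on $\phi$ being a degree-one homeomorphism of the circle, and it is the monotonicity of $\phi'$ together with the skew-product form \eqref{eq:polarF4} that makes this transparent.
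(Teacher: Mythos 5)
Your proof is correct and follows the same polar-coordinate strategy as the paper, but several individual items are argued in a genuinely more self-contained way. For \ref{LocAttSz}) the paper appeals to the estimate from Cima--Gasull--Ma\~nosas that the eigenvalues of $DF_4(x,y)$ lie in the open unit disk \emph{everywhere}, whereas you only need (and only use) the immediate fact that $DF_4(0)=0$, which is sufficient for local asymptotic stability. For \ref{PtosFixosSzl}) the paper cites Corollary~2 of Alarc\'on--Gutierrez--Mart\'{\i}nez-Alfaro together with those global eigenvalue bounds, while your cross-product observation $\cos\theta\cdot\cos^3\theta+\sin\theta\cdot\sin^3\theta=\cos^4\theta+\sin^4\theta>0$ gives a direct elementary proof that no nonzero point can be fixed. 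For \ref{SzMapsSectors}) the paper reasons through monotonicity of the coordinate functions of the astroid (together with the transversality determinant), whereas you go straight to the angle map $\phi$, compute $\phi'=3\sin^2\theta\cos^2\theta/(\sin^6\theta+\cos^6\theta)\ge 0$ and check the endpoint values; this makes the degree-one circle-map nature of the angular component explicit. Finally, for \ref{F4homeo}) the paper shows $F_4$ is a continuous bijection of $\R^2$ and invokes the lemma from Ortega's book that a continuous injective self-map of the plane is open, while you glue the sectoral homeomorphisms; your route works, but the last step (``the pasted global inverse is continuous'') is precisely what the paper's cited openness/invariance-of-domain lemma is for, so it would be cleaner to state $F_4$ is a continuous bijection and then invoke invariance of domain rather than arguing the gluing by hand. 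In short: same skeleton, but your treatment of \ref{PtosFixosSzl}), \ref{LocAttSz}) and \ref{SzMapsSectors}) replaces external citations and the astroid coordinate argument by direct calculations; the trade-off is that the paper's approach records the stronger global eigenvalue bound, which is reused later (in Proposition~\ref{unfoldingG4}).
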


\begin{proof}
Some of the statements follow from previously established results. Since we deal with these first, the order of the proof does not follow the numbering in the list above.

Statements \ref{HomeoSz})  and  \ref{OriginalSz}) are immediate from the expressions of $F_4$ and of $P$, as remarked in \cite{Cima-Manosa}. Note that the periodic orbit of $P$ of statement \ref{OriginalSz})  lies in the boundary of the sectors $\bigcup_j\partial S_{j,4}$. 
 \medskip
 
In the appendix of  \cite{Cima-Manosa} it is shown that the eigenvalues of  $DF_4(x,y)$ lie in the open unit disk, establishing  \ref{LocAttSz}).  Statement \ref{PtosFixosSzl}) follows as a direct consequence of Corollary 2 in \cite{Alarcon-orbitas-periodicas} and the same estimates on the eigenvalues.
 \medskip

Concerning \ref{EquivariantSz}) note that $R_4$,
the generator of $\Z_4$, acts on the plane
as $R_4 (x,y) = (-y,x)$.
In order to prove that $F_4(x,y)$
 is $\Z_4$-equivariant we compute
$$
F_4(R_4 (x,y)) = (-\frac{kx^3}{1+x^2+y^2}, -\frac{ky^3}{1+x^2+y^2})
$$
and
$$
R_4 F_4(x,y) = R_4 (-\frac{ky^3}{1+x^2+y^2}, \frac{kx^3}{1+x^2+y^2}) =
(\frac{-kx^3}{1+x^2+y^2},\frac{-ky^3}{1+x^2+y^2}).
$$
Observing that these are equal establishes statement \ref{EquivariantSz}).
 \medskip
 
The behaviour of $F_4$ on line rays described  in  \ref{RaysInSzl}) is easier to understand if we write
$(x,y)$ in polar coordinates $(x,y)=(r\cos\theta,r\sin\theta)$ yielding:
\begin{equation}\label{SzlenkPolar}
F_4(r\cos\theta,r\sin\theta)=\frac{kr^3}{1+r^2}\left(-\sin^3\theta,\cos^3\theta \right)\ .
\end{equation}

From this expression it follows that for each fixed $\theta$, the
line ray through $(\cos\theta,\sin\theta)$ is mapped into the
line ray through $(-\sin^3\theta,\cos^3\theta)$. The mapping is a
bijection, since $r^3/(1+r^2)$ is a monotonically increasing
bijection from $[0,+\infty)$ onto itself. In particular, it follows from this that $F_4$ is injective and that $F_4(\R^2)=\R^2$. Since every continuous and injective map in $\R^2$ is open (see Ortega \cite[Chapter 3, Lemma 2]{OrtegaBook}), it follows that $F_4$ is a homeomorphism, establishing \ref{F4homeo}).
\medskip

The behaviour  of $F_4$ on sectors and their boundary is the essence of  \ref{SzMapsSectors}).
From the definition of the sectors we have
 $$
 S_{j+1,4}=R_4\left(S_{j,4}\right)
 $$
 and therefore, by $\Z_4$-equivariance,
 $$
 F_4\left(S_{j+1,4}\right)= F_4\left(R_4\left(S_{j,4}\right)\right)= R_4\left(F_4\left(S_{j,4}\right)\right) \ .
 $$
It then suffices to show that
$F_4\left(\overline{S_{1,4}}\right)=\overline{S_{2,4}}$.
The sectors $S_{1,4}$ and $S_{2,4}$ have the simple forms
$$
S_{1,4}=\left\{ (x,y):\quad x>0,\quad y>0\right\}
\qquad
S_{2,4}=\left\{ (x,y):\quad x<0,\quad y>0\right\} .
$$

From the expression of $F_4$ it is immediate that if $x>0$ and
$y>0$ then the first coordinate of $F_4(x,y)$ is negative and the
second is positive and thus $F_4\left({S_{1,4}}\right)\subset
{S_{2,4}}$. It remains to show the equality, which we delay until after the proof of \ref{astroSz}).

\begin{figure}
\begin{center}
\includegraphics[scale=.50] {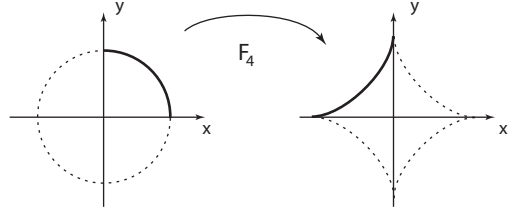}
\end{center}
\caption{Szlenk's example $F_4$ maps a quarter of the unit circle into a quarter of the  astroid
$\frac{k}{2}(-\sin^3\theta,\cos^3\theta)$.\label{Astroide}}
\end{figure}
\medskip

The expression (\ref{SzlenkPolar}) in polar coordinates shows that the circle
$(\cos\theta,\sin\theta)$, $0\le\theta\le 2\pi$ is mapped by $F_4$ into the curve
$\gamma(\theta)=\frac{k}{2}(-\sin^3\theta,\cos^3\theta)$  known as the {\em astroid}
(Figure~\ref{Astroide}).
The arc $\gamma(\theta)$, $0\le\theta\le\pi/2$  joins $(0,\frac{k}{2})$ to $(-\frac{k}{2},0)$.
Since for $\theta\in(0,\pi/2)$ the functions $\cos^3\theta$ and  $-\sin^3\theta$ are both monotonically decreasing with strictly negative derivatives,
then the  $0\le\theta\le\pi/2$ arc of the astroid has no self intersections
and the restriction of $F_4$ to the quarter of a circle $0\le\theta\le\pi/2$ is a bijection into this arc
(Figure~\ref{Astroide}).
\medskip

Moreover,  the determinant of the matrix with rows $\gamma(\theta)$ and $\gamma^\prime(\theta)$
is
$$
\det\left(\begin{array}{c}\gamma(\theta)\\ \gamma^\prime(\theta)
\end{array}\right)
=\frac{3k^2}{4}\sin^2\theta\cos^2\theta
$$
showing that the arc of the astroid is transverse at each point $\gamma(\theta)$, $0<\theta<\pi/2$ to the line ray through it. Transversality fails at the end points of the arc, but the line rays still go across the astroid at the cusp points --- this is assertion~\ref{astroSz}).
\medskip

Thus, $F_4$ induces a bijection between line rays in $S_{1,4}$ and line rays in $S_{2,4}$ and
using the radial property  \ref{RaysInSzl})
it follows that $F_4\left({S_{1,4}}\right)={S_{2,4}}$.
The behaviour on the boundary of ${S_{1,4}}$ also follows either from the radial property or from a simple direct calculation, concluding the proof of \ref{SzMapsSectors}).
\end{proof}

\subsection {Universal unfolding of $F_4$}

In this section we discuss a universal unfolding of the singularity $F_4$ in the context of $\Z_4$-equivariant maps that fix the origin  under contact equivalence. All the preliminaries concerning equivariant unfolding theory, as well as the proof of the result, are deferred to an appendix. The trusting reader may proceed without reading it.

\begin{proposition}
A $\Z_4$ universal unfolding under contact equivalence of the germ at the origin of the singularity $F_4$   is given by
$$
G_4(x,y,\alpha, \beta, \delta) = F_4(x,y) + \alpha(x,y) + \left[\beta + \delta (x^2+y^2)\right](-y,x),
$$
where parameters $\alpha$, $\beta$ and $\delta$ are real.
\end{proposition}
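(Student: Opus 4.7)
\textit{Plan.} The proof follows the standard route of equivariant singularity theory. The equivariant universal unfolding theorem (see \cite{golu2}) characterises universal unfoldings of a $\Z_4$-equivariant germ $F_4$ under contact equivalence as those perturbations whose parameter derivatives at the origin span a complement in $\vec{\mathcal{E}}_{\Z_4}$ to the tangent space
\[
T(F_4) \;=\; \bigl\{\,A\cdot F_4 + (DF_4)\cdot X\,\bigr\},
\]
where $\vec{\mathcal{E}}_{\Z_4}$ is the module of $\Z_4$-equivariant map germs $(\R^2,0)\to(\R^2,0)$, $A$ runs over $\Z_4$-equivariant matrix-valued germs and $X$ over $\Z_4$-equivariant vector-field germs. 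The plan therefore reduces to computing $T(F_4)$ explicitly and verifying that the three perturbations $(x,y)$, $(-y,x)$ and $(x^2+y^2)(-y,x)$ span a complement.

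To perform the computation I would pass to the complex coordinate $z=x+iy$, in which the generator of $\Z_4$ acts by $z\mapsto iz$. The invariant ring is generated over $\R$ by $u=z\bar z$ together with the real and imaginary parts of $z^4$, while $\vec{\mathcal{E}}_{\Z_4}$ is generated over the invariants by $z$ and $\bar z^3$. A direct calculation recasts
\[
F_4(z) \;=\; \frac{ik}{4(1+u)}\bigl(3uz+\bar z^3\bigr),
\]
so $DF_4(0)=0$ and the first nonzero jet of $F_4$ is the cubic $\frac{3ik}{4}uz+\frac{ik}{4}\bar z^3$. Because both $F_4$ and $DF_4$ vanish at the origin, the two real linear equivariant directions $z$ and $iz$ automatically lie outside $T(F_4)$; in real coordinates these are $(x,y)$ and $(-y,x)$, which supply the parameters $\alpha$ and $\beta$.

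At cubic order the equivariant space has real dimension four, spanned by $uz$, $iuz$, $\bar z^3$ and $i\bar z^3$. Constant equivariant matrices $A_0\in\C$ acting on $F_4$ produce the real span of $(3uz+\bar z^3)$ and $i(3uz+\bar z^3)$, while the derivative terms $(DF_4)\cdot z$ and $(DF_4)\cdot(iz)$ contribute $\frac{3ik}{4}(3uz+\bar z^3)$ (already in the above span) and $\frac{3k}{4}(-uz+\bar z^3)$, adding one further real direction. Together these cover a three-dimensional $\R$-subspace of the cubic terms, leaving precisely $iuz$ outside; in real coordinates this is $(x^2+y^2)(-y,x)$, which provides $\delta$. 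All equivariant germs of sufficiently high order are absorbed into $T(F_4)$ by a Nakayama-type argument based on the equivariant Malgrange preparation theorem, so that $F_4$ is finitely determined as a $\Z_4$-singularity of codimension exactly $3$. This last finite-determinacy step is the main technical obstacle, and is the principal content of the appendix.
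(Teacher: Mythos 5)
Your strategy is the same as the paper's: compute the contact tangent space $T(F_4)=\{S\cdot F_4+(DF_4)\cdot X\}$ modulo a suitable power of the maximal ideal, identify a complement, and invoke Nakayama together with the equivariant unfolding theorem. Passing to the complex coordinate $z=x+iy$ is a genuinely cleaner way to organise the calculation: your identity $F_4(z)=\frac{ik}{4(1+u)}(3uz+\bar z^3)$ with $u=z\bar z$ is correct, the invariant ring you list matches $N,A,B$ of the appendix (with $A+4iB=z^4$), and the cubic-order tangent-space computation comes out right. The tangent space at degree $3$ is $\operatorname{span}_\R\{uz,\bar z^3,3iuz+i\bar z^3\}$, leaving $iuz=(x^2+y^2)(-y,x)=NX_2$ as a complement; since $F_4$ and $DF_4$ vanish at the origin, $z=X_1$ and $iz=X_2$ are outside the tangent space. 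This reproduces exactly the paper's complement $V_2=\{X_1,X_2,NX_2\}$.

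The genuine gap is that the step you name ``the main technical obstacle'' is not carried out, and it is precisely the nontrivial content of the appendix proof. Asserting ``all equivariant germs of sufficiently high order are absorbed into $T(F_4)$ by a Nakayama-type argument'' presupposes that $F_4$ is finitely determined as a $\Z_4$-singularity, which is exactly what has to be proved. Concretely one must show $E^5\subset T_{\Equiv(\Z_4)}(F_4)$, where $E^5$ is the submodule of germs of degree $\ge 5$: the paper does this by listing the twelve generators of $E^5$ over $\inv(\Z_4)$, reducing the generators of $T(F_4)$ modulo $\mathcal M(\Z_4)E^5$, and checking that the resulting $13\times 12$ coefficient matrix $Q$ has rank $12$, so that Nakayama applies. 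Without this rank computation (or an equivalent explicit verification), the degree-$3$ calculation alone does not establish that the codimension is finite, nor that $\{X_1,X_2,NX_2\}$ is a complement in all of $\Equiv(\Z_4)$ rather than merely modulo $E^5$. Your proof proposal describes the correct route but stops short of the one calculation that makes the proposition true.
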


From the point of view of the dynamics, it is important to describe the maps in the unfolding 
that preserve the dynamic properties of $F_4$. The first result is immediate from the expression of the derivative of $G_4$ at the origin:

\begin{lemma}\label{lemaatrator}
The origin is a hyperbolic local attractor for $G_4(x,y,\alpha, \beta, \delta)$ if and only if $\alpha^2+\beta^2<1$.
\end{lemma}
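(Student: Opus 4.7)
The plan is to compute the Jacobian $DG_4(0,0)$ explicitly, identify its eigenvalues, and translate the hyperbolic-attractor condition into a condition on $\alpha$ and $\beta$. Since $G_4$ is a sum of three pieces, I will linearize each summand at the origin separately and then add.

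First I would note that $F_4$ contributes nothing to the linearization: its two components are cubic polynomials divided by $1+x^2+y^2$, which equals $1$ at the origin, so the three-jet of $F_4$ at $0$ is simply $(-ky^3, kx^3)$ and hence $DF_4(0,0)=0$. Next, $\alpha(x,y)$ is linear, with Jacobian $\alpha I$. Finally, in the term $\left[\beta + \delta(x^2+y^2)\right](-y,x)$ the $\delta$-dependent part is cubic and drops out of the derivative at the origin, while $\beta(-y,x)$ contributes $\beta J$ with $J=\begin{pmatrix}0&-1\\1&0\end{pmatrix}$.

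Summing gives
$$DG_4(0,0) = \alpha I + \beta J = \begin{pmatrix}\alpha & -\beta\\ \beta & \alpha\end{pmatrix},$$
whose eigenvalues are the complex conjugate pair $\alpha\pm i\beta$, each of modulus $\sqrt{\alpha^2+\beta^2}$. The origin is a hyperbolic local attractor exactly when both eigenvalues lie strictly inside the unit circle, and this condition is equivalent to $\alpha^2+\beta^2<1$. The word \emph{hyperbolic} excludes the boundary case $\alpha^2+\beta^2=1$ automatically, so the strict inequality gives both directions of the equivalence.

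There is essentially no obstacle here; the argument reduces to a one-line linear algebra check. The only observation worth flagging is that the parameter $\delta$ disappears from the linearization because it enters only through a cubic term, so it plays no role in the attractor criterion at this order.
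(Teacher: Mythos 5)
Your proof is correct and is exactly the computation the paper has in mind when it calls the result ``immediate from the expression of the derivative of $G_4$ at the origin'': one finds $DG_4(0,0)=\alpha I+\beta J$ with eigenvalues $\alpha\pm i\beta$, whence hyperbolic attraction is equivalent to $\alpha^2+\beta^2<1$. No gap, and no meaningful departure from the paper's (unwritten but implied) argument.
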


Although the unfolding above refers to the germ at the origin, we show below that
its expression defines a map that shares some dynamic properties of $F_4$ for some parameter values.
These values lie on two lines in parameter space.

\begin{proposition} \label{unfoldingG4}
Let $g(x,y)$ be either $G_4(x,y,\alpha,0,0)$ or $G_4(x,y,0,\beta,0)$.
Then for $\alpha$ or $\beta$ positive and small enough,
\begin{itemize}
	\item  $g$ is a global diffeomorphism;
	\item at every point in $\R^2$  the eigenvalues of the jacobian of $g$ have modulus less than one;
	\item  there exists $p \in \R^2$ such that $g^4 (p)=p$.
\end{itemize}
\end{proposition}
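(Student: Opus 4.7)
The plan is to verify the three assertions separately, treating $g$ as a small perturbation of $F_4$ and relying on the structural facts collected in Proposition~\ref{Szlenk}. For the eigenvalue condition, a direct expansion gives $\tr DF_4 = 2kxy(y^2-x^2)/(1+r^2)^2$ and $\det DF_4 = k^2 x^2 y^2 (3r^4 + 12 r^2 + 9)/(1+r^2)^4$, so $T^2 - 4D < 0$ off the coordinate axes and $T = D = 0$ on them; consequently the spectrum of $DF_4$ consists either of a genuine conjugate pair with $|\lambda|^2 = D$ or of a double zero. The pointwise bound $|\lambda|<1$ obtained in \cite{Cima-Manosa} extends, by continuity on the one-point compactification of $\R^2$, to a uniform bound $|\lambda|\leq\mu<1$ (the limit value $D\to 3k^2\sin^2(2\theta)/4\leq 3k^2/4<1$ as $r\to\infty$ is where the hypothesis $k<2/\sqrt{3}$ is spent). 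For the $\alpha$-family, $Dg = DF_4 + \alpha I$ shifts the spectrum by $\alpha$; since the only real number ever attained as an eigenvalue of $DF_4$ is $0$, no eigenvalue equals $-\alpha$ for $\alpha>0$, hence $\det Dg \neq 0$, and the shifted spectrum lies in $|\lambda|<1$ as soon as $\alpha<1-\mu$. For the $\beta$-family, $Dg = DF_4 + \beta R_4$ has characteristic polynomial $\lambda^2 - T\lambda + \bigl(D + \beta(c-b) + \beta^{2}\bigr)$, where $c - b = \partial_x F_4^2 - \partial_y F_4^1 = k\bigl(x^2(3+3y^2+x^2) + y^2(3+3x^2+y^2)\bigr)/(1+r^2)^2 \geq 0$; hence the perturbed determinant stays strictly positive for $\beta>0$, and the modulus bound on eigenvalues follows by uniform continuity on the compactification.

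With $\det Dg \neq 0$ throughout $\R^2$, Hadamard's inverse function theorem reduces the global diffeomorphism statement to properness of $g$. Expanding $|g|^2 = |F_4|^2 + 2\langle F_4, v\rangle + |v|^2$ with $v$ the perturbation term, in the $\beta$-case $\langle F_4, (-y,x)\rangle = k(x^4+y^4)/(1+r^2) \geq 0$, which yields $|g|\geq\beta r$ outright; in the $\alpha$-case the limit of $|g|^2/r^2$ as $r\to\infty$ is $k^2(\sin^6\theta + \cos^6\theta) + \alpha k\sin(4\theta)/2 + \alpha^2$, a trigonometric expression bounded below by $k^2/4 + O(\alpha) > 0$ for $\alpha$ small, so $g$ is proper in both cases.

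For the period-$4$ orbit I exploit the $\Z_4$-equivariance of $g$: any $p$ with $g(p) = R_4(p)$ automatically satisfies $g^4(p) = R_4^4(p) = p$. In the $\beta$-family, direct substitution gives $g(r,0) = \bigl(0,\,kr^3/(1+r^2) + \beta r\bigr)$, and solving the scalar equation $kr^2/(1+r^2) + \beta = 1$ produces the explicit point $p_\beta = \bigl(\sqrt{(1-\beta)/(k+\beta-1)},\,0\bigr)$ for $0<\beta<1$. In the $\alpha$-family I apply the implicit function theorem to $\Phi(x,y,\alpha) = g(x,y) - R_4(x,y)$ at the period-$4$ point $P = ((k-1)^{-1/2}, 0)$ of $F_4$: a short computation gives $DF_4(P) = \bigl(\begin{smallmatrix} 0 & 0 \\ (3k-2)/k & 0 \end{smallmatrix}\bigr)$ and hence $\det\bigl(DF_4(P) - R_4\bigr) = -2(k-1)/k \neq 0$, so the zero of $\Phi$ persists for small $\alpha>0$. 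The main technical obstacle I anticipate is making the eigenvalue bound of \cite{Cima-Manosa} uniform at infinity on the compactification; once this is in hand, the remaining perturbation arguments are routine.
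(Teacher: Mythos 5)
Your proof is correct, and it is genuinely more self-contained than the one in the paper, which defers the $\alpha$-family entirely to \cite[Theorem~E]{Cima-Manosa} and treats only the $\beta$-family, closing the period-$4$ statement with a one-line appeal to hyperbolicity of the orbit of $P$ under $F_4$. The computations you give for $\tr DF_4$, $\det DF_4$, and $c-b$ are accurate, your reduction of $\det Dg$ to $D+\beta(c-b)+\beta^2$ matches the paper's identity, and the properness estimates are consistent with the paper's polar-coordinate computation (the $\alpha$-case estimate is new and handled correctly: the cross term $\alpha k\sin(4\theta)/2$ can be negative, but it vanishes at exactly the angles where $\sin^6\theta+\cos^6\theta$ attains its minimum $1/4$, so the limit of $|g|^2/r^2$ stays bounded away from zero for $\alpha$ small).

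The most interesting divergence from the paper is your treatment of the period-$4$ orbit. The paper invokes hyperbolicity of the cycle $\{P,R_4P,R_4^2P,R_4^3P\}$ for $F_4$ and structural stability under small $C^1$ perturbations. You instead exploit the $\Z_4$-equivariance of $g$ (which holds since the unfolding terms $\alpha X_1$, $\beta X_2$ are equivariant) to reduce $g^4(p)=p$ to the stronger, lower-order equation $g(p)=R_4(p)$: any solution not fixed by $R_4$, that is, any nonzero solution, yields an orbit of minimal period $4$. This buys you an explicit formula $p_\beta=\bigl(\sqrt{(1-\beta)/(k-1+\beta)},0\bigr)$ in the $\beta$-family and a clean two-dimensional implicit-function-theorem argument in the $\alpha$-family via $\det(DF_4(P)-R_4)=-2(k-1)/k\neq0$, instead of having to examine the spectrum of $D(F_4^4)(P)$ against the unit circle. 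It also shows directly that the bifurcating orbit has minimal period exactly $4$, which the bare structural-stability argument leaves slightly implicit.

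Two small points of wording rather than substance. First, the phrase ``extends by continuity on the one-point compactification'' is not literally right, because $D(x,y)$ has a direction-dependent limit $3k^2\sin^2(2\theta)/4$ at infinity and so does not extend continuously to $\mathbb{S}^2$; what you actually want, and what is true, is the uniform algebraic bound $D\le 3k^2/4$ on all of $\R^2$ (one checks $r^4(3+r^2)\le(1+r^2)^3$), which is the same bound $k\sqrt{3}/2<1$ the paper cites from \cite[Theorem~D]{Cima-Manosa}. Second, the ``main technical obstacle'' you flag at the end is therefore not one: the uniform bound is already available, either by that elementary inequality or by citation, so the perturbation arguments you describe as routine do in fact close the proof.
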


\begin{proof}
The case $\alpha > 0$ is the one adressed in \cite[Theorem E]{Cima-Manosa}. We treat the case $\beta > 0$ in a similar manner.

The matrix $DF_4(x,y)$ is given in the appendix. In this proof denote it by
$$
DF_4=\left( \begin{array}{cc}
a & b \\ 
c & d
\end{array} \right).
$$
If $\mu$ is an eigenvalue of $Dg$ then
$$
\mu = \frac{1}{2}\left( -\tr{(DF_4)} \pm \sqrt{\tr^2{(DF_4)} -4\det{(DF_4)}-4\beta(\beta+c-b)} \right).
$$
We know from \cite[Theorem D]{Cima-Manosa} that all eigenvalues of $DF_4$ are zero on the coordinate axes and complex otherwise. Furthermore, all eigenvalues of $DF_4$ have modulus less than $k\sqrt{3}/2<1$. The latter statement ensures that, for any $k$ and for small $\beta$, the eigenvalues of $Dg$ also have modulus less than one. 

We want to show that all eigenvalues of $Dg$ are non-zero. When the eigenvalues of $DF_4$ are zero it is clear that those of $Dg$ are not. Away from the axes, the eigenvalues of $DF_4$ are non-zero and $\det{(DF_4)}>0$. Since $\det{(Dg)}=\det{(DF_4)}+\beta^2-\beta(b-c)$, the eigenvalues of $Dg$ are zero if and only if
$$
\det{(DF_4)} + \beta^2 = \beta (b-c).
$$
Since $b-c<0$, then for $\beta > 0$, it is always the case that the eigenvalues of $Dg$ are nonzero.

So far, we have shown that $g$ is a local diffeomorphism at every point. In order to show that it is a global diffeomorphism, we show as in \cite[Theorem E]{Cima-Manosa} that
$$
\lim_{|(x,y)| \rightarrow \infty} |g(x,y)| = \infty.
$$
This implies that $g$ is proper and we may invoke Hadamard's theorem (quoted in \cite{Cima-Manosa}) that asserts that a proper local diffeomorphism is a global diffeomorphism.

In order to establish the limit above we use polar coordinates and write
$$
g(r, \theta) = \frac{kr^3}{1+r^2}(-\sin^3{\theta}, \cos^3{\theta})+\beta(-r\sin{\theta},r\cos{\theta})
$$
and hence,
$$
|g(r,\theta)|^2 = \frac{k^2r^6}{(1+r^2)^2}(\sin^6{\theta}+\cos^6{\theta})+\beta^2 r^2 +2\beta k \frac{r^4}{1+r^2}(\sin^4{\theta}+\cos^4{\theta}).
$$
Noting now that $\sin^6{\theta}+\cos^6{\theta} \geq 1/4$ and  $\sin^4{\theta}+\cos^4{\theta} \geq 1/2$, we use $1+r^2<2r^2$ for $r>1$ to write
$$
|g(r,\theta)|^2 \geq \frac{kr^2}{16} + \beta^2 r^2 + \frac{\beta k r^2}{2} 
 \stackrel{r\rightarrow \infty}{\longrightarrow} \infty .
$$
The existence of points of period $4$ follows from the hyperbolicity of the period $4$ points of $F_4$.
\end{proof}

\section{Construction of $\Z_n$-equivariant examples}

The next examples refer to a local attractor, examples with a local repellor may be obtained considering $f^{-1}$.

\begin{theorem} \label{exper} For each $n\ge 2$ there exists  $f:\R^2\to\R^2$ such
that:
\begin{enumerate}
\renewcommand{\theenumi}{\alph{enumi}}
\renewcommand{\labelenumi}{{\theenumi})}
\item\label{C1a} $f$ is a differentiable homeomorphism;
\item\label{C1b} $f$  has symmetry group $\Z_n$;
\item \label{C1E} $Fix(f)=\{0\}$;
\item\label{C1d} The origin is a local attractor;
\item \label{C3E} There exists a periodic orbit of minimal period $n$.
\end{enumerate}
\end{theorem}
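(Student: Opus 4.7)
The plan is to build $f = f_n$ directly in polar coordinates, imitating the essential features of Szlenk's $F_4$. Write
\[
f_n(r,\theta) = \bigl(h(r),\ \theta+\tfrac{2\pi}{n}+\psi(r,\theta)\bigr),
\]
taking $h:[0,\infty)\to[0,\infty)$ to be a smooth increasing bijection with $h(0)=0$, $0 < h'(0) = \lambda < 1$, and a unique positive fixed point $h(r_0)=r_0$, and $\psi(r,\theta)$ a smooth function that is $\tfrac{2\pi}{n}$-periodic in $\theta$, small in $C^1$ (so that $\theta\mapsto\theta+\tfrac{2\pi}{n}+\psi$ is a diffeomorphism of $S^1$ for each $r$), identically zero along $\theta=0$ (so that the positive $x$-axis maps onto the ray $\theta=\tfrac{2\pi}{n}$), and flat to high order in $r$ at the origin. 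A concrete choice such as $\psi(r,\theta)=\varepsilon\chi(r)\sin(n\theta)$, with $\chi$ a smooth cut-off vanishing on a neighbourhood of $r=0$ and $\varepsilon$ small, will do.

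With such $f_n$ in hand I verify (\ref{C1a})--(\ref{C3E}) in turn. Property (\ref{C1b}) is immediate from the $\tfrac{2\pi}{n}$-periodicity of $\psi$; no reflection commutes with $f_n$ (a direct angular computation shows the obstruction to any reflection symmetry is a nonzero multiple of $\tfrac{2\pi}{n}$), so $\Z_n$ is exactly the symmetry group. For (\ref{C1a}), bijectivity of $h$ and the diffeomorphism property of the angular map make $f_n$ a homeomorphism of $\R^2$, and the flatness of $\psi$ at $r=0$ renders the Cartesian expression differentiable at the origin, where polar coordinates are singular. For (\ref{C1E}), a fixed point forces $h(r)=r$ (hence $r\in\{0,r_0\}$) and $\tfrac{2\pi}{n}+\psi(r,\theta)\in 2\pi\Z$, the latter being impossible when $|\psi|$ is small at $r_0$. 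Property (\ref{C1d}) follows since $Df_n(0)$ equals $\lambda$ times rotation by $\tfrac{2\pi}{n}$, with eigenvalues of modulus $\lambda<1$. Finally (\ref{C3E}) is obtained by iteration: because $\psi(r_0,0)=0$ and $h(r_0)=r_0$ one gets $f_n^k(r_0,0)=(r_0,\tfrac{2k\pi}{n})$, giving $n$ distinct points with $f_n^n(r_0,0)=(r_0,0)$.

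The step I expect to demand the most care is differentiability at the origin: polar coordinates are singular there, and the $\theta$-dependence introduced by $\psi$ can destroy smoothness in Cartesian variables unless carefully controlled. This is handled by arranging the cut-off $\chi$ so that $\psi$ vanishes to infinite order at $r=0$; then $f_n$ agrees with the smooth $S^1$-equivariant map $z\mapsto h(|z|)\tfrac{z}{|z|}\ee^{2\pi i/n}$ in a neighbourhood of the origin, with the perturbation contributing nothing to any derivative at $0$. With this precaution the construction succeeds uniformly in $n\ge 2$, and Theorem \ref{exper} follows.
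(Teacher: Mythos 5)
Your construction is a genuinely different route from the paper's: instead of conjugating Szlenk's map $F_4$ by the angular stretch $h_n$ on a fundamental domain and extending by equivariance, you build $f_n$ from scratch in polar coordinates as a radial contraction composed with a perturbed rotation. This is simpler and avoids the delicate gluing/differentiability argument of Lemma~\ref{Fnsmooth} (and the reliance on properties \ref{RaysInSzl}), \ref{SzMapsSectors}), \ref{astroSz}) of Proposition~\ref{Szlenk}). It also gives an immediately dissipative example if one further arranges $h(r)<r$ for $r>r_0$, recovering Corollary~\ref{SzlenkDissipative} at no extra cost.

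However, there is a real gap in part \ref{C1b}) for $n=2$, and it comes precisely from the concrete choice $\psi(r,\theta)=\varepsilon\chi(r)\sin(n\theta)$. Your parenthetical claim that ``the obstruction to any reflection symmetry is a nonzero multiple of $2\pi/n$'' is the problem: the obstruction from the rotation part is $4\pi/n$, which is indeed a nonzero multiple of $2\pi/n$ but is $\equiv 0 \pmod{2\pi}$ when $n=2$. Concretely, for $n=2$ your map reads in Cartesian coordinates $f_2(x,y)=-h(r)\bigl(\cos(\theta+\psi),\sin(\theta+\psi)\bigr)$, and because $\psi(r,-\theta)=-\psi(r,\theta)$ (odd in $\theta$), one checks directly that $f_2$ commutes with $(x,y)\mapsto(x,-y)$. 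Thus the symmetry group is at least $D_2$, not $\Z_2$, and \ref{C1b}) fails. The fix is easy but must be made: choose $\psi$ that still vanishes at $\theta=0$, is $2\pi/n$-periodic in $\theta$, but is \emph{not} odd in $\theta$, e.g.\ $\psi(r,\theta)=\varepsilon\chi(r)\bigl(1-\cos(n\theta)\bigr)$. With this choice the reflection computation gives, for every axis of reflection, a condition of the form $\psi(r,\theta)+\psi(r,2\phi-\theta)\equiv -4\pi/n \pmod{2\pi}$ that fails even when $4\pi/n\equiv 0$, because the left-hand side does not vanish identically. The remaining parts \ref{C1a}), \ref{C1E}), \ref{C1d}), \ref{C3E}) go through as you wrote, including the differentiability at the origin via the flat cut-off.

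Two smaller remarks. First, for \ref{C1a}) you should say explicitly that injectivity plus continuity plus invariance of domain gives openness, hence homeomorphism; you assert it but the argument is the same as the paper's appeal to Ortega's lemma. Second, for $n\ge 3$ you should still verify that \emph{no} reflection axis works, not just $\theta\mapsto-\theta$; the uniform estimate $|\psi(r,\theta)+\psi(r,2\phi-\theta)|<4\pi/n$ for $\varepsilon$ small handles all axes at once, and is worth stating.
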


\begin{proof}
For $n\ge 2$, the map
\begin{equation}\label{DefHn}
h_n\left(r\cos\theta,r\sin\theta\right)=\left(r\cos\frac{4\theta}{n},r\sin\frac{4\theta}{n}\right)
\end{equation}
is a local diffeomorphism at all points in $\R^2\backslash \{0\}$, is continuous at $0$ and
$h_n(S_{1,4})=S_{1,n}$, $h_n(S_{2,4})=S_{2,n}$ with $\left|h_n(x,y)\right| =\left|(x,y)\right|$.
Moreover,   the restriction of $h_n$ to ${\overline{S_{1,4}}}$ is a bijection onto
$\overline{S_{1,n}}$ and $h_n$ maps each line ray through the origin into another line ray through the origin.
\medskip

Similar properties hold for the inverse
$$
h_n^{-1}\left(r\cos\theta,r\sin\theta\right)=\left(r\cos\frac{n\theta}{4},r\sin\frac{n\theta}{4}\right)
$$
with $h_n^{-1}(S_{1,n})=S_{1,4}$.

\begin{figure}
\begin{center}
\includegraphics[scale=.50] {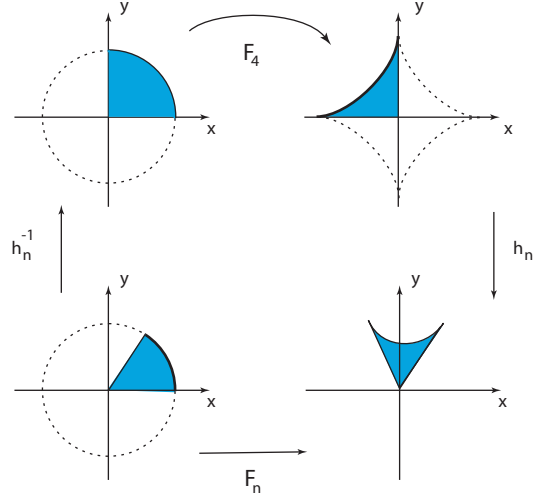}
\end{center}
\caption{Construction of the $\Z_n$-equivariant example $F_n$ in a fundamental domain of the
$\Z_n$-action, shown here for $n=6$.\label{FigSlzenkZn}}
\end{figure}
\bigskip

Let $F_n:\overline{S_{1,n}}\longrightarrow \overline{S_{2,n}}$ be defined by
(see Figure~\ref{FigSlzenkZn})
\begin{equation}\label{defineF}
F_n(x,y)=h_n\circ F_4\circ h_n^{-1}(x,y) \ .
\end{equation}

We extend $F_n$ to a $\Z_n$-equivariant map $F_n:\R^2\longrightarrow\R^2$ recursively, as follows.
\medskip

Suppose   for $1\le j\le n-1$ the map
$F_n$ is already defined in $S_{j,n}$ with $F_n(S_{j,n})=S_{j+1,n}$.
If $(x,y)\in S_{j+1,n}$ we have $R_n^{-1}(x,y)\in S_{j,n}$  and thus
$F_n\circ R_n^{-1}(x,y)$ is well defined, with
$F_n\circ R_n^{-1}(x,y)\in S_{j+1,n}$.
Define $F_n(x,y)$ for $(x,y)\in S_{j+1,n}$ as
$F_n(x,y)=R_n\circ F_n\circ R_n^{-1}(x,y)\in S_{j+2,n}$.
Finally,  for $(x,y)\in S_{n-1,n}$ we obtain $F_n(x,y)\in S_{1,n}$.
\medskip

The following properties of $F_n$ now hold by construction, using Proposition~\ref{Szlenk}:
\begin{itemize}
\item
$F_n$ is $\Z_n$-equivariant.
\item
$Fix(F_n)=\{0\}$.
\item
The origin is a local attractor.
\item
$F_n^n(P)=P$ for $P=\left((k-1)^{-1/2},0\right)$,
 with $F_n^j(P)\ne P$ for $j=2,\ldots,n-1$.
 Note that all $F_n^j(P)$ lie on the boundaries $\partial S_{j,n}$ of the sectors $S_{j,n}$.
\item
$F_n$ maps each line ray through the origin onto another line ray through the origin.
\end{itemize}

\begin{figure}
\begin{center}
\includegraphics[scale=.30] {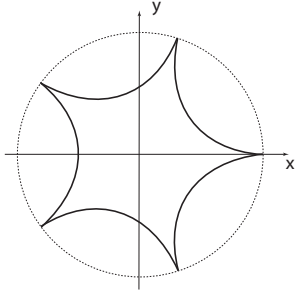}
\end{center}
\caption{Image of the circle $(\sin\theta, \cos\theta)$ by  the $\Z_n$-equivariant example $F_n$, shown here for $n=5$.\label{FigAstro5}}
\end{figure}

Since $h_n$ maps line rays to line rays, to see that $F_n$ is a homeomorphism
 it is sufficient to observe that
$\gamma_n(\theta)=F_n(\cos\theta,\sin\theta)$, $0\le\theta\le 2\pi$ is a simple closed curve that meets each line ray only once and does not go through the origin (Figure~\ref{FigAstro5}).
This is true
because  away from the origin both $h_n$ and $h_n^{-1}$ are differentiable with non-singular derivatives.
Since  $h_n$ and $h_n^{-1}$ map line rays into line rays,
it follows from assertion~\ref{astroSz}) of Proposition~\ref{Szlenk} that
$\gamma_n$ is transverse to line rays except at the cusp points $\gamma_n(\theta)$,
$\theta=\frac{2 m \pi}{n}$, $m=0,1,\ldots,n-1$
where the line ray goes  across it.

It remains to show
 that $F_n$ is everywhere differentiable in $\R^2$.
 This is done
 in Lemma~\ref{Fnsmooth}  below.
\end{proof}

\begin{lemma}\label{Fnsmooth}
$F_n$ is everywhere differentiable in $\R^2$.
\end{lemma}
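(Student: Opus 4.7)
The plan is to bypass a case-by-case boundary analysis by showing that $F_n$ equals the single map $h_n\circ F_4\circ h_n^{-1}$ throughout $\R^2\setminus\{0\}$, not just on $\overline{S_{1,n}}$, and then to handle the origin separately via a growth estimate.

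First I would verify the intertwining $h_n\circ R_4 = R_n\circ h_n$, which is a one-line polar calculation. Combined with the $\Z_4$-equivariance of $F_4$ (Proposition \ref{Szlenk}, item \ref{EquivariantSz}), this yields
\[
R_n\circ(h_n F_4 h_n^{-1}) = h_n\circ R_4\circ F_4\circ h_n^{-1} = h_n\circ F_4\circ R_4\circ h_n^{-1} = (h_n F_4 h_n^{-1})\circ R_n,
\]
so $h_n F_4 h_n^{-1}$ is $\Z_n$-equivariant on $\R^2\setminus\{0\}$ (and single-valued there: the ambiguity in a local branch of the angle used to define $h_n^{-1}$ is absorbed by the $\Z_4$-equivariance of $F_4$ together with $R_n^n=I$). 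By construction $F_n$ and $h_n F_4 h_n^{-1}$ agree on $\overline{S_{1,n}}$; both being $\Z_n$-equivariant and $\overline{S_{1,n}}$ being a fundamental domain, they must coincide on all of $\R^2\setminus\{0\}$.

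Smoothness of $F_n$ off the origin is then immediate: near any point of $\R^2\setminus\{0\}$ one may pick a local branch of the angle on which $h_n$ and $h_n^{-1}$ are smooth local diffeomorphisms, and $F_4$ is $C^1$ by Proposition \ref{Szlenk}, item \ref{HomeoSz}, so the composition $h_n F_4 h_n^{-1}$ is $C^1$. All the sector boundaries that looked problematic in the piecewise definition are thereby covered uniformly, without any separate derivative-matching computation.

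For the origin, note that $h_n^{\pm 1}$ preserve magnitudes, so by (\ref{SzlenkPolar}) together with the elementary bound $\sin^6\theta+\cos^6\theta\le 1$,
\[
|F_n(z)| \;=\; \bigl|F_4(h_n^{-1}(z))\bigr| \;\le\; \frac{k|z|^3}{1+|z|^2} \;=\; o(|z|)\qquad\text{as }z\to 0 ,
\]
which, combined with $F_n(0)=0$, gives differentiability at the origin with $DF_n(0)=0$. The one point I expect to require care is the global well-definition of $h_n F_4 h_n^{-1}$ in spite of $h_n$ being only a local diffeomorphism; once the intertwining $h_n R_4 = R_n h_n$ is exploited this is forced, and the rest of the argument is routine.
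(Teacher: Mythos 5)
Your argument is correct, and it takes a genuinely different route from the paper's. The paper handles smoothness away from the origin by a direct computation: it writes $\widehat{F}_4$ in polar coordinates, computes the Jacobian $D\widehat{F}_4(r,\theta)$, and verifies by inspection that $D\widehat{F}_4(r,0)=D\widehat{F}_4(r,\pi/2)$, so the derivatives of the two pieces of the piecewise-defined $F_n$ match along $\partial S_{1,n}\cap\partial S_{2,n}$. You instead observe the conjugation identity $h_n\circ R_4 = R_n\circ h_n$ and use the $\Z_4$-equivariance of $F_4$ to show that the single expression $h_n\circ F_4\circ h_n^{-1}$ is already $\Z_n$-equivariant, well defined, and therefore coincides with the piecewise-extended $F_n$ on all of $\R^2\setminus\{0\}$; smoothness away from the origin is then automatic. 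This is more conceptual and buys you immunity from computational error, while the paper's version is more self-checking. Your treatment of the origin via the explicit bound $|F_n(z)|\le k|z|^3/(1+|z|^2)$ is equivalent to the paper's $\varepsilon$--$\delta$ argument, both exploiting that $h_n^{\pm1}$ preserve norms.

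The one place I'd push for more detail is the parenthetical about single-valuedness of $h_n\circ F_4\circ h_n^{-1}$, since $h_n$ and $h_n^{-1}$ are genuinely multi-valued on $\R^2\setminus\{0\}$ when $n\notin\{1,2,4\}$. What makes the composition descend to a true map is not only the ambiguity in $h_n^{-1}$ but also the one in the outer $h_n$: increasing the input angle by $2\pi$ shifts the angle after $h_n^{-1}$ by $n\pi/2$, i.e., applies $R_4^{\,n}$; $\Z_4$-equivariance of $F_4$ pushes this rotation through; and then $h_n$ converts $R_4^{\,n}$ into $R_n^{\,n}=\mathrm{Id}$ via the intertwining. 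Tracking this net angular shift of exactly $2\pi$ is what closes the argument, and I'd state it explicitly rather than leave it as ``is absorbed.'' With that expansion, your proof is complete and stands on its own.
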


\begin{proof}
First we
show that $DF_4(0,0)=(0)$ (zero matrix) implies that $F_n$ is differentiable at the origin with $DF_n(0,0)=(0)$.
That  $DF_4(0,0)=(0)$ means that for every $\varepsilon>0$ there is a
$\delta>0$ such that, for every $X\in\R^2$,  if $\left| X\right|<\delta$ then
$$
\left| F_4(X)- F_4(0,0)-DF_4(0,0) X\right|=\left| F_4(X)\right|<\varepsilon \left| X\right|\ .
$$
Since $ h_n$ and $h_n^{-1}$ preserve the norm, we have that if $Y=h_n(X)$ then $|Y|=|X|$
and furthermore, for any $Y$ such that $|Y|<\delta$ we obtain
$$
\left| F_n(Y)\right|=\left| h_n\left( F_4\left(h_n^{-1}(Y)\right)\right)\right|=
\left| h_n\left( F_4(X)\right)\right|=\left|  F_4(X)\right|
<\varepsilon \left| X\right|= \varepsilon \left| Y\right|
$$
Therefore, since $F_n(0,0)=(0,0)$ and since this holds for any $\varepsilon$,
$$
\lim_{|X| \rightarrow 0} \frac{|F_n(X)-F_n(0,0)-(0)X|}{|X|} = 0
$$
proving our claim. 
\medbreak

Recall that in \eqref{defineF} and in the text thereafter the map $F_n$ is made up by gluing different functions on sectors:
in $S_{1,n}$ the expression of $F_n$ is given by  $h_n\circ F_4\circ h_n^{-1}$
and in $S_{2,n}$ by $R_n\circ h_n\circ F_4\circ h_n^{-1}\circ R_n^{-1}$.
Both expressions define differentiable functions away from the origin since both $h_n$ and $h_n^{-1}$ are of class $C^1$ in $\R^2\backslash\{(0,0)\}$.
We have already shown that $F_n$ is differentiable at the origin.
It remains to prove that the derivatives of the two functions coincide at the common boundary of
$\partial S_{1,n}$ and $\partial S_{2,n}$.
At the remaining boundaries the result follows from the $\Z_n$-equivariance of $F_n$.

Since we are working away from the origin, we may use polar coordinates.
The  expressions for $h_n$, $R_n$ and their inverses take the simple forms below,
where we use $\widehat{f}$ to indicate the expression of $f$ using polar coordinates in both source and target:
$$
\widehat{h}_n(r,\theta)=\left(r,\frac{4\theta}{n}\right)
\qquad
\widehat{h}_n^{-1}(r,\theta)=\left(r,\frac{n\theta}{4}\right)
$$
$$
\widehat{R}_n(r,\theta)=\left(r,\theta+\frac{2\pi}{n}\right)
\qquad
\widehat{R}_n^{-1}(r,\theta)=\left(r,\theta-\frac{2\pi}{n}\right) \ .
$$

Let $\widehat{F}_4(r,\theta)=\left(\Psi_4(r,\theta),\Phi_4(r,\theta)\right)$ be the expression of $F_4$ in polar coordinates.
From (\ref{SzlenkPolar}) we get:
\begin{equation}\label{RPolar}
\Psi_4(r,\theta)=\frac{kr^3}{1+r^2}\sqrt{\cos^6\theta+\sin^6\theta}=
\frac{kr^3}{1+r^2}\sqrt{1-3\cos^2\theta+3\cos^4\theta}
\end{equation}
\begin{equation}\label{ThetaPolar}
\Phi_4(r,\theta)=\left\{\begin{array}{ll}
\displaystyle\arctan\left( -\frac{\cos^3\theta}{\sin^3\theta}\right)&\quad\mbox{if}\quad \theta\ne k\pi\\
\phantom{.}&\\
\displaystyle\arccot\left( -\frac{\sin^3\theta}{\cos^3\theta}\right)&\quad\mbox{if}\quad \theta\ne \frac{\pi}{2}+k\pi \ .
\end{array}\right.
\end{equation}
The derivative $D\widehat{F}_4(r,\theta)$ of $\widehat{F}_4$ is thus,
\begin{equation}\label{derivadaPolares}
\left(\begin{array}{cc}
\displaystyle kr^2\frac{3+r^2}{(1+r^2)^2}\sqrt{\cos^6\theta+\sin^6\theta}&
\displaystyle \frac{kr^3}{1+r^2}
\frac{3\sin\theta\cos\theta\left(\sin^4\theta-\cos^4\theta\right)}{\sqrt{\cos^6\theta+\sin^6\theta}}\\
\phantom{.}&\\
0&\displaystyle
\frac{3\sin^2\theta\cos^2\theta}{{\cos^6\theta+\sin^6\theta}}
\end{array}\right)
\end{equation}
where the two alternative forms for $\Phi_4(r,\theta)$ yield the same expression for the derivative.

Note that the Jacobian matrix of $\widehat{h}_n$ is constant and the same is true for its inverse.
The derivatives of both  $\widehat{R}_n$ and of $\widehat{R}_n^{-1}$ are the identity.
Let $(r,2\pi/n)$ be the polar coordinates of a point $\xi$ in
$\left(\partial S_{1,n}\cap \partial S_{2,n}\right)\backslash \{0\}$.
In order to show that the derivatives  at $\xi$ of
$\widehat{h}_n\circ \widehat{F}_4\circ \widehat{h}_n^{-1}$
and of
$\widehat{R}_n\circ \widehat{h}_n\circ \widehat{F}_4\circ \widehat{h}_n^{-1}\circ \widehat{R}_n^{-1}$
coincide, we only need to show that
$D\widehat{F}_4$ at $\widehat{h}_n^{-1}(r,2\pi/n)=(r,\pi/2)$
equals
$D\widehat{F}_4$ at $\widehat{h}_n^{-1}(\widehat{R}_n^{-1}(r,2\pi/n))=(r,0)$.
More precisely, for any $(r,\theta)$
$$
D
\widehat{h}_n(r,\theta)=A_n=\left(\begin{array}{ll}
1&0\\
0&\frac{4}{n}
\end{array}\right)
\qquad
D\widehat{h}_n^{-1}(r,\theta)=B_n=
\left(\begin{array}{ll}
1&0\\
0&\frac{n}{4}
\end{array}\right)
$$
and thus
\begin{eqnarray*}
{}&{}&
D\left(\widehat{R}_n\circ \widehat{h}_n\circ \widehat{F}_4\circ \widehat{h}_n^{-1}\circ \widehat{R}_n^{-1}\right)(\xi)\\
{}&=&D\widehat{R}_n(\widehat{h}_n(\widehat{F}_4((r,0)))
D\widehat{h}_n(\widehat{F}_4((r,0))
 D\widehat{F}_4(r,0)
D \widehat{h}_n^{-1}(r,0)
D\widehat{R}_n^{-1}(r,2\pi/n)
\\{}&=&
Id\cdot A_n\cdot
 D\widehat{F}_4(r,0)
\cdot B_n\cdot Id
\\{}&=&
A_n\cdot
 D\widehat{F}_4(r,0)
\cdot B_n
\end{eqnarray*}
and
\begin{eqnarray*}
{}&{}&
D\left( \widehat{h}_n\circ \widehat{F}_4\circ \widehat{h}_n^{-1}\right)(\xi)
\\{}&=&
D\widehat{h}_n(\widehat{F}_4((r,\pi/2))
 D\widehat{F}_4(r,\pi/2)
D \widehat{h}_n^{-1}(r,2\pi/n)
\\ {}&=&
A_n\cdot
 D\widehat{F}_4(r,\pi/2)
\cdot B_n\ .
\end{eqnarray*}

From (\ref{derivadaPolares}) it follows that
$$
D\widehat{F}_4(r,\pi/2)=D\widehat{F}_4(r,0)=
\left(\begin{array}{cc}
\displaystyle kr^2\frac{3+r^2}{(1+r^2)^2}&0\\
\phantom{.}&\\
0&0
\end{array}\right)
$$
completing our proof.
\end{proof}

The construction in the proof of Theorem~\ref{exper} only works because Szlenk's example $F_4$ has the special properties  \ref{RaysInSzl}),   \ref{SzMapsSectors})  and ~\ref{astroSz}) of
Proposition~\ref{Szlenk}. For instance, identifying $\R^2\sim\C$ the map
$f(z)=\overline{z}^3$ is $\Z_4$-equivariant, but does not have the properties above and
$h_5\circ f\circ h_5^{-1}(z)=f(z)$.
\bigbreak

Alarc\'on {\sl et al.}~\cite[Theorem 4.4]{Alarcon} construct, starting from $F_4$, an example having the additional property that $\infty$ is a repelllor.
The new example, $H(x,y)$, is of the form
$$
H(x,y)=\phi(|F_4(x,y)|) F_4(x,y)
$$
where $\phi:[0,\infty)\longrightarrow [0,\infty)$ is described in  \cite[Lemma 4.6]{Alarcon}.

Then $H$ has all the properties of Proposition~\ref{Szlenk}.
Therefore, applying to $H$ the construction of Theorem~\ref{exper} we obtain the following:

\begin{corollary}\label{SzlenkDissipative}
For each $n\ge 2$ there exists a map  $f:\R^2\to\R^2$ satisfying properties \ref{C1a})--\ref{C3E}) of
Theorem~\ref{exper} and, moreover, for which $\infty$ is a repellor.
\end{corollary}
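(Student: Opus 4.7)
The plan is to run the proof of Theorem~\ref{exper} verbatim with the map $H$ in place of $F_4$, and then argue separately that $\infty$ remains a repellor for the resulting map. The paragraph immediately preceding the Corollary already asserts, citing \cite{Alarcon}, that $H$ satisfies every clause \ref{HomeoSz})--\ref{astroSz}) of Proposition~\ref{Szlenk}. Granting this, the construction
\[
f\big|_{\overline{S_{1,n}}} = h_n\circ H\circ h_n^{-1},\qquad f\big|_{S_{j+1,n}} = R_n\circ f\big|_{S_{j,n}}\circ R_n^{-1},
\]
together with the arguments in the proof of Theorem~\ref{exper} and Lemma~\ref{Fnsmooth}, produces a $\Z_n$-equivariant differentiable homeomorphism $f:\R^2\to\R^2$ satisfying properties \ref{C1a})--\ref{C3E}). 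The smoothness at the origin follows because $H(x)=\phi(|F_4(x)|)F_4(x)$ inherits $DH(0,0)=(0)$ from $F_4$; across sector boundaries, the matching of derivatives works exactly as in Lemma~\ref{Fnsmooth}, because the radial rescaling $\phi(|F_4|)$ does not affect the angular matching used there.

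For the new conclusion, the key observation is that $h_n$, $h_n^{-1}$ and $R_n$ all preserve the Euclidean norm. Consequently, for every $x\in\R^2$ one has $|f(x)| = |H(y)|$ for some $y$ with $|y|=|x|$: on $\overline{S_{1,n}}$ this is immediate from $|f(x)|=|H(h_n^{-1}(x))|$, and on the remaining sectors it follows from the recursive definition together with $|R_n\cdot z|=|z|$. Since $\infty$ is a repellor for $H$ by \cite[Theorem~4.4]{Alarcon}, there is a bounded absorbing set for $H$, equivalently a radius $R>0$ such that $|H(y)|<|y|$ for every $|y|>R$. Plugging this into the identity $|f(x)|=|H(y)|$ with $|y|=|x|$ yields $|f(x)|<|x|$ whenever $|x|>R$, so $\infty$ is also a repellor for $f$.

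The step I expect to be the most delicate is the verification that $H$ still fulfils clause \ref{astroSz}) of Proposition~\ref{Szlenk}, because the radial rescaling deforms the astroid of Figure~\ref{Astroide}. However, since $\phi$ depends only on $|F_4|$, the rescaling keeps the angular component of $H(\cos\theta,\sin\theta)$ unchanged: the deformed curve still has cusps exactly at the images of $\theta=m\pi/2$, and transversality to every other line ray follows from the same determinant computation as in Proposition~\ref{Szlenk}, with $\gamma(\theta)$ multiplied by a positive scalar that depends only on $\theta$. With this in hand, the remaining clauses of Proposition~\ref{Szlenk} for $H$ are either trivial verifications or are explicitly proved in \cite{Alarcon}, and the construction of Theorem~\ref{exper} goes through unchanged, delivering the dissipative refinement in a single stroke.
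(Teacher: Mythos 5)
Your proof follows the same route as the paper: run the construction of Theorem~\ref{exper} with $H(x,y)=\phi(|F_4(x,y)|)F_4(x,y)$ in place of $F_4$, relying on the fact that $H$ inherits all the clauses of Proposition~\ref{Szlenk}. The paper treats this as essentially immediate, while you helpfully spell out the two points it leaves tacit --- that the radial rescaling preserves clause~\ref{astroSz}) (the determinant $\det(\gamma_H,\gamma_H')=\lambda^2\det(\gamma,\gamma')$, so transversality survives) and that the norm-preserving property of $h_n$, $h_n^{-1}$ and $R_n$ carries the dissipativity of $H$ over to $f$; both arguments are correct, though your phrase ``equivalently a radius $R$ with $|H(y)|<|y|$ for $|y|>R$'' overstates an equivalence (the latter is sufficient, not equivalent, but it is what the construction in \cite{Alarcon} actually delivers, so no harm is done).
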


\section{Final comments}

It remains an interesting question to find out whether our construction can be applied to $G_4$ to produce a $\Z_n$ universal unfolding of $F_n$. A partial answer is given next. The proof is straightforward.

\begin{lemma}
If $\alpha = 0$ then $G_4$ has the property that $G_4\left({S_{1,4}}\right)={S_{2,4}}$.
\end{lemma}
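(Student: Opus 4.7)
The plan is a direct sign check on the components of $G_4$ combined with $\Z_4$-equivariance. Setting $\alpha=0$ rewrites the unfolding as
$$
G_4(x,y,0,\beta,\delta)=F_4(x,y)+\lambda(x,y)\,R_4(x,y),
$$
where $\lambda(x,y)=\beta+\delta(x^2+y^2)$ and $R_4(x,y)=(-y,x)$ is the $\Z_4$-generator; the correction to $F_4$ is thus a $\Z_4$-invariant scalar multiple of $R_4$.

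For any $(x,y)\in S_{1,4}$, meaning $x>0$ and $y>0$, both summands lie in the open quadrant $S_{2,4}=\{(u,v):u<0,\ v>0\}$: the first by item~\ref{SzMapsSectors}) of Proposition~\ref{Szlenk}, and the second because $R_4(x,y)=(-y,x)$ has $-y<0$ and $x>0$. Since $S_{2,4}$ is an open convex cone, it is closed under nonnegative linear combinations, so $G_4(x,y)\in S_{2,4}$ whenever $\lambda(x,y)\ge 0$ on $S_{1,4}$, which is the natural regime suggested by the one-parameter slices considered in Proposition~\ref{unfoldingG4}. This yields the inclusion $G_4(S_{1,4})\subseteq S_{2,4}$.

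The reverse inclusion is obtained from the $\Z_4$-equivariance of $G_4$, which holds for all values of $\beta,\delta$ because both $F_4$ and the correction term commute with $R_4$. Equivariance together with the previous step gives $G_4(\overline{S_{j,4}})\subseteq\overline{S_{j+1,4}}$ for each $j=1,\ldots,4$, and once $G_4$ is known to be a global diffeomorphism (as in Proposition~\ref{unfoldingG4}) the images must exhaust $\R^2=\bigcup_j\overline{S_{j+1,4}}$, forcing each inclusion to be an equality. The only substantive point to verify is the positivity of $\lambda$ on $S_{1,4}$; once that is in hand, the cone-closure property together with equivariance makes the proof straightforward, which is the reason the paper does not spell it out.
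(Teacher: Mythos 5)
Your forward inclusion is sound: with $\alpha=0$ the map decomposes as $G_4=F_4+\lambda\cdot(-y,x)$ with $\lambda(x,y)=\beta+\delta(x^2+y^2)$, and since $F_4(x,y)$ lies in the open quadrant $S_{2,4}$ and $(-y,x)$ in its closure whenever $x,y>0$, their sum lies in $S_{2,4}$ precisely when $\lambda\ge 0$. You were right to flag this: the inclusion genuinely fails for $\beta<0$ (take $y$ small and $x$ bounded, so that $ky^2/(1+x^2+y^2)+\beta+\delta(x^2+y^2)<0$ and the first coordinate of $G_4$ becomes positive). So the lemma carries the implicit hypothesis $\beta,\delta\ge 0$, consistent with the regime used everywhere else in the paper, and naming that hypothesis is a useful clarification rather than a defect.

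The gap is in the reverse inclusion. You appeal to $G_4$ being a global diffeomorphism "as in Proposition~\ref{unfoldingG4}", but that proposition, with $\alpha=0$, only establishes this on the slice $\delta=0$, $\beta>0$ small. The sentence following the lemma in the paper makes clear that the sector property is wanted for general $\beta,\delta$ (the additional restriction $\delta=0$ is invoked separately, and only to get diffeomorphisms), so surjectivity onto $S_{2,4}$ cannot be outsourced to Proposition~\ref{unfoldingG4}. Your closing remark that "the only substantive point to verify is the positivity of $\lambda$" therefore understates matters: positivity plus equivariance only gives the inclusions $G_4(\overline{S_{j,4}})\subseteq\overline{S_{j+1,4}}$, not equality. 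A self-contained repair is available: on each boundary ray of $S_{1,4}$ the map reduces to the scalar function $r\mapsto kr^3/(1+r^2)+\beta r+\delta r^3$, which for $\beta,\delta\ge 0$ is a strictly increasing bijection of $[0,\infty)$, so $G_4$ carries $\partial S_{1,4}$ bijectively onto $\partial S_{2,4}$; combined with properness of $G_4$ on $\overline{S_{1,4}}$ (which follows from $|G_4|\ge|F_4|$ when $\beta,\delta\ge 0$, via the same polar cross-term estimate used in the proof of Proposition~\ref{unfoldingG4}), a standard boundary/degree argument then forces $G_4(\overline{S_{1,4}})=\overline{S_{2,4}}$, and hence $G_4(S_{1,4})=S_{2,4}$.
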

As a consequence, the previous construction applied to $G_4$ with $\alpha=0$ produces other examples with $\Z_n$-symmetry and period $n$ orbits. Furthermore, using Proposition \ref{unfoldingG4}, if also $\delta =0$ these new examples are diffeomorphisms.

Note that, even though the unfolding applies only locally, the dynamic properties are robust beyond this constraint as they hold if we use the expression of the unfolding to define a global map.
\medskip

A very interesting problem in Dynamical Systems is to
describe the global dynamics with hypotheses based on local
properties of the system. The Markus-Yamabe Conjecture is an
example but not the only one. For instance,  Alarc\'on {\sl et al.}~\cite{Alarcon}
prove the existence of a global attractor arising from a
unique local attractor, using the theory of free homeomorphisms of
the plane. Recently, Ortega and Ruiz del Portal in \cite{ORN}, have studied the global
behavior of an orientation preserving homeomorphism introducing
techniques based on the theory of prime ends. They
define the rotation number for some orientation preserving
homeomorphisms of $\R^2$ and show how this number gives information
about the global dynamics of the system. 
In this context, even a list of elementary concepts would be too long to include here. The discussion that follows may be taken as an appetizer for the reader willing to look them up properly in \cite{ORN}, \cite{Pom} and
\cite{Litt}.

The theory of prime ends was introduced by Carath\'eodory in order to
study the complicated shape of the boundary of a simply connected
open subset of $\R^2$.
When such a subset $U$ is non empty and
proper, by the Riemann mapping theorem, there is a conformal homeomorphism 
from $U$ onto the open unit disk.
Usually this homeomorphism cannot be extended to the closed disk.
Carath\'eodory's compactification associates the boundary of $U$
with the space of prime ends $\mathbb{P}$, which is homeomorphic
to $\mathbb{S}^1$.  
In that way, $U\cup \mathbb{P}$ is homeomorphic
to the closed unit disk.
The correspondence between points in the boundary of $U$
and points in $\mathbb{P}$ may be both multi-valued and not  one to one, but
 if $f$ is an orientation preserving
homeomorphism with $f(U)=U$, then $f$ induces an orientation
preserving homeomorphism $\tilde{f}$ in $\mathbb{P}$. Since the
space of prime ends is homeomorphic to the unit circle, the
rotation number of $\tilde{f}$ is well defined and the rotation
number of $f$ is defined to be equal to the rotation number of
$\tilde{f}$.

The points in $\partial_{\mathbb{S}^2}U$, the boundary of $U$ in the 
one point compactification of the plane, that play an important
role in the dynamics are accessible points. A point $\alpha \in
\partial_{\mathbb{S}^2}U$ is {\em accessible} from $U$ if there exists
an arc $\xi$ such that $\alpha$ is an end point of $\xi$ and  $\xi
\setminus \{\alpha\} \subset U$. Then $\alpha$ determines a
prime end $p(\alpha)\in \mathbb{P}$, which may not be unique,
such that $\xi \setminus \{p\} \cup \{p(\alpha)\}$ is an arc in $U\cup
\mathbb{P}$.

Accessible points are dense in $\partial_{\mathbb{S}^2}U$, but for
instance, in the case of fractal boundaries there exist points
which are not accessible from $U$. On the contrary, when the
boundary is well behaved, for instance an embedded curve of
$\mathbb{R}^2$, accessible points define a unique prime end. That
means that accessible periodic points of $f$ are periodic points
of $\tilde{f}$ with the same period.  
Consequently the rotation
number of $f$ is $1$ divided by the period. See \cite{Pom} and
\cite{Litt} for more details and definitions.

\begin{proposition} The examples $F_n$ in Theorem \ref{exper} have rotation number $1/n$.
\end{proposition}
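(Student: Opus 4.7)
The plan is to apply the prime-ends framework sketched above (following \cite{ORN}) to an $F_n$-invariant, simply connected, proper open subset $U\subset\R^2$, and to read off the rotation number of the induced circle homeomorphism $\tilde F_n$ from the period-$n$ orbit of $P$. The natural candidate for $U$ is the connected component containing $0$ of the basin of attraction of the origin: by Theorem~\ref{exper} this is open and $F_n$-invariant, and it is a proper subset of $\R^2$ because the periodic orbit $\{R_n^j(P)\}_{j=0}^{n-1}$ is disjoint from it. Two preliminary observations are required: first, $F_n$ is orientation-preserving, which follows from~\eqref{derivadaPolares} together with the positivity of the polar Jacobians of $\widehat h_n$, $\widehat h_n^{-1}$ and $\widehat R_n$; second, $U$ is simply connected, which I would argue using the radial structure preserved by $F_n$ and the $\Z_n$-symmetry.

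The next step is to exhibit a period-$n$ orbit of $\tilde F_n$. A direct computation in polar coordinates shows that $F_n$ sends the ray at angle $2\pi j/n$ onto the ray at angle $2\pi(j+1)/n$ via the one-dimensional map $r\mapsto kr^3/(1+r^2)$, which on $[0,|P|)$ is a contraction with $0$ as its unique fixed point. Consequently the open segment from $0$ to $P_j = R_n^j(P)$ on each ray lies entirely in $U$ and accumulates on $P_j$, so each $P_j$ is accessible from $U$ and lies on $\partial_{\mathbb{S}^2} U$. Since $\partial U$ is a tame embedded arc at each $P_j$ (the line rays being smoothly embedded away from $0$), each $P_j$ determines a unique prime end $\pi_j = p(P_j)$. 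Property~\ref{C3E}) of Theorem~\ref{exper}, together with the identity $F_n(P_j) = P_{j+1}$ furnished by the construction of $F_n$ as $h_n \circ F_4 \circ h_n^{-1}$ followed by equivariant extension, then yields $\tilde F_n(\pi_j) = \pi_{j+1 \bmod n}$, so the $\pi_j$ form a period-$n$ orbit of $\tilde F_n$.

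Finally, to pin down $\rho(\tilde F_n) = 1/n$ rather than some other $k/n$ with $\gcd(k,n)=1$, I would use the orientation-preserving property of $F_n$ together with $\Z_n$-equivariance. The Carath\'eodory identification $\mathbb{P} \cong \mathbb{S}^1$ is orientation-preserving, so the cyclic order of $\pi_0,\ldots,\pi_{n-1}$ on $\mathbb{S}^1$ coincides with the cyclic order of $P_0,\ldots,P_{n-1}$ around the origin as seen from inside $U$. The latter is the standard counter-clockwise order by the construction of the sectors $S_{j,n}$, and $\tilde F_n$ advances this order by exactly one step at each iteration, forcing $\rho(\tilde F_n) = 1/n$.

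The step I expect to be the main obstacle is establishing the simple connectedness of $U$ and the tameness of $\partial U$ at the periodic points, since one does not have an explicit description of $\partial U$ and must argue abstractly from the $\Z_n$-symmetry and the radial structure. Once these topological facts are in hand, everything else reduces to standard bookkeeping about orientation-preserving semiconjugacies from an invariant Jordan domain to the circle.
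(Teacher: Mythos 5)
Your plan follows the same route as the paper: take the basin of attraction $U_n$ of the origin as the $F_n$-invariant, proper, simply connected open set, observe that the period-$n$ point $P$ is accessible from $U_n$ and determines a unique prime end, and conclude that the induced circle homeomorphism $\tilde F_n$ has a period-$n$ orbit giving rotation number $1/n$. Your final paragraph pinning down $k=1$ (rather than $k/n$ with $\gcd(k,n)=1$) by matching the cyclic order of the $P_j$ around $U_n$ to the cyclic order of the sectors $S_{j,n}$ is a useful elaboration; the paper leaves this implicit, relying on the observation $F_n(S_{j,n})=S_{j+1,n}$.

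There is one genuine gap, and it is precisely the one you flag: the tameness of $\partial U_n$ near each $P_j$, needed so that the accessible point $P_j$ corresponds to a \emph{unique} prime end. Your parenthetical justification --- ``the line rays being smoothly embedded away from $0$'' --- does not establish this. The accessing arc from the origin to $P_j$ along a ray is indeed smooth and lies in $U_n$, but this says nothing about the local structure of $\partial U_n$ itself, which is transverse to that ray near $P_j$ and need not be a priori tame. The paper's resolution is to use the \emph{hyperbolicity} of the periodic orbit of $P$: locally, $\partial U_n$ near $P_j$ is the stable manifold of the period-$n$ orbit, which is a smooth embedded curve. This is exactly the missing ingredient, and it replaces the abstract argument from ``$\Z_n$-symmetry and radial structure'' that you anticipated would be difficult. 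With hyperbolicity supplied, your remaining steps are correct and in line with the paper.
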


\begin{proof}
By construction of the maps in Theorem~\ref{exper}, the basin of
attraction of the origin $$ U_n=\bigcup_{j=0}^{n-1}
R_n^j\left(h_n(U)\cap S_{1,n}\right)$$ is invariant by the map
$F_n$ and is a non empty and proper simply connected open set.
Moreover, as the periodic point $P$ is hyperbolic, the boundary of
$U$ is an embedded curve of $\mathbb{R}^2$ in a neighborhood of
$P$. In addition, $P$ is an accessible point from $U_n$, thus the
rotation number of $F_n$ is $\frac{1}{n}$. 
\end{proof}

The fact that the symmetry forces the maps in  Theorem~\ref{exper}
to have a rational rotation number seems to point out at a connection
between symmetry and  rotation number. It raises the question: for
orientation preserving homeomorphisms of the plane with a non
global asymptotically stable fixed point, does $\Z_n-$equivariance
imply a rational rotation number?

The question  is relevant because the rotation number gives strong
information about the global dynamics of the system. For instance,
 consider a dissipative orientation preserving $\Z_n-$equivariant
homeomorphism $f$ of the plane with an asymptotically stable fixed
point $p$. If the question has an affirmative answer, then
Proposition $2$ of \cite{ORN} implies that $p$ is a
global attractor under $f$ if and only if $f$ has no other
periodic point.

\bigbreak

\paragraph{\bf Acknowledgements}
The research of all authors at Centro de Ma\-te\-m\'a\-ti\-ca da Universidade do Porto (CMUP)
 had financial support from
 the European Regional Development Fund through the programme COMPETE and
 from  the Portuguese Government through the Fun\-da\-\c c\~ao para
a Ci\^encia e a Tecnologia (FCT) under the project\\
 PEst-C/MAT/UI0144/2011.
B. Alarc\'on was also supported from Programa Nacional de Movilidad de Recursos Humanos of the Plan Nacional de I+D+I 2008-2011 of the Ministerio de Educaci\'on (Spain) and grant MICINN-08-MTM2008-06065 of the Ministerio de Ciencia e Innovaci\'on (Spain).

\newpage

\appendix
\section*{Appendix --- \ Unfolding Theory for $\Z_n$}

In order to better understand the singularity for Szlenk's $\Z_4$-equivariant map, we calculate its codimension and provide a universal unfolding. Some of the information below may be retrieved from the $D_4$ equivariant set-up described for instance in Golubitsky {\em et al.} \cite{golu2}.

Let $\inv(\Z_4)$ be the set of $\Z_4$-invariant function germs from the plane to the reals. This is a ring generated by the following Hilbert basis
\begin{equation} \label{invariants}
\inv(\Z_4) = \left< N=x^2+y^2, A=x^4+y^4-6x^2y^2, B=(x^2-y^2)xy \right>
\end{equation}
in the sense that every germ in  $\inv(\Z_4)$ can be written in the form $\phi(N,A,B)$ where $\phi$ is a smooth function of three variables.

The set of $\Z_4$-equivariant map germs is a module over the ring of invariants; it is denoted by $\Equiv(\Z_4)$ and generated by the following
\begin{equation} \label{equivariants}
\begin{array}{lcl}
X_1  =  (x,y);  &\qquad& X_3  =  (x(x^2-3y^2),y(y^2-3x^2)); \\
X_2=(-y,x); &\qquad&
X_4=(-y(y^2-3x^2), x(x^2-3y^2).
\end{array}
\end{equation}

Two map-germs, $g$ and $h$, are $\Z_4$-contact-equivalent if (see Mather \cite{Mather1968}, even though we follow the notation in \cite{golu2}, chapter XIV) there exists an invertible change of coordinates $x \mapsto X(x)$, fixing the origin and $\Z_4$-equivariant, and a matrix-valued germ $S(x)$ satisfying for all $\gamma \in \Z_4$
$$
S(\gamma x)\gamma = \gamma S(x),
$$
with $S(0)$ and $dX(0)$ in the same connected component as the identity in the space of linear maps of the plane, and such that 
$$
g(x)=S(x)h(X(x)).
$$
The set of matrices satisfying the $\Z_4$-equivariance described above is denoted and generated as follows
$$
\equivM(\Z_4) = \left< S_j; T_j=iS_j, \; \; j=1,\hdots 4 \right> ,
$$
with
\begin{eqnarray*}
& &T_ i=\left( \begin{array}{cc} 
0 & 1 \\ 
-1 & 0 \end{array} \right),  
S_1 = \left( \begin{array}{cc} 
1 & 0 \\ 
0 & 1 \end{array} \right),
S_2=  \left( \begin{array}{cc} 
x^2 & xy \\ 
xy & y^2 \end{array} \right), \\
& & S_3= \left( \begin{array}{cc} 
-x^2 & xy \\ 
xy & -y^2 \end{array} \right),
S_4= \left( \begin{array}{cc} 
0 & x^3y \\ 
xy^3 & 0 \end{array} \right).
\end{eqnarray*}
Note that, in the $Z_4$-equivariant context, all map germs preserve the origin. In such cases as these, the tangent space $T$ to the $\Z_4$-contact orbit coincides with the restricted tangent space, $RT$.

The tangent space to $F_4$ is
$$
T_{\Equiv(\Z^4)}(F_4) = \left< (dF_4)X_i, S_jF_4, T_jF_4 \right>,
$$
where $X_i$ is one of the generators of $\Equiv(\Z_4)$ and $S_j$ and $T_j$ are the generators of $\equivM(\Z_4)$.

Given $F_4$ and dividing both components by $k$ as it does not affect the singularity, we have
$$
dF_4 = \left( \begin{array}{cc}
\frac{2xy^3}{(1+x^2+y^2)^2} & -\frac{3y^2(1+x^2+y^2)-2y^4}{(1+x^2+y^2)^2} \\
 & \\
\frac{3x^2(1+x^2+y^2)-2x^4}{(1+x^2+y^2)^2} & -\frac{2x^3y}{(1+x^2+y^2)^2} \end{array} \right).
$$
Note that all rows of this matrix have the common factor $1/(1+x^2+y^2)^2$, which does not affect the singularity. Also, all the products with $F_4$ will exhibit the common factor $1/(1+x^2+y^2)$, which again does not affect the singularity. We therefore present the generators of $T_{\Equiv(\Z^4)}(F_4)$ after a multiplication by the corresponding common factor. To exemplify,
$$
S_1 F_4 = (-\frac{y^3}{1+x^2+y^2}, \frac{x^3}{1+x^2+y^2})
$$
is reported as $S_1F_4=(-y^3,x^3)$.
This stated, we have the following list of generators of $T_{\Equiv(\Z^4)}(F_4)$, where the symbol $\sim$ indicates that a simplification was made through a product by a non-zero invariant:
\begin{eqnarray*}
(dF_4)X_1 & = & 3N(N-1)X_2+(N-1)X_4 \sim 3NX_2 + X_4; \\
(dF_4)X_2 & = & \frac{1}{4}(N(N+1)X_1-(N+1)X_3) \sim NX_1-X_3 \\
(dF_4)X_3 & = & \frac{3}{4}[(N^3+N^2+2A)X_2+(N^2+N-\frac{2}{3}A)X_4]; \\
(dF_4)X_4 & = & \frac{1}{4}[(N^3+6A+3N^2)X_1+(2A-3N^2-9N)X_3]; \\
S_1F_4 & = & 3NX_2+X_4 \\
S_2F_4 & = & -3BX_1-AX_2+NX_4 \\
S_3F_4 & = & \frac{1}{4}(NX_4-N^2X_2) \sim N^2X_2-NX_4 \\
S_4F_4 & = & (-\frac{1}{16}N^3-\frac{5}{32}NA)X_2+\frac{1}{8}BX_3+\frac{7}{32}N^2X_4 \\
T_1F_4 & = & \frac{1}{4}(3NX_1+X_3) \sim 3NX_1+X_3 \\
T_2F_4 & = & -BX_2; \\
T_3F_4 & = & \frac{1}{4}(A-N^2)X_1-BX_2; \\
T_4F_4 & = & \frac{1}{16}[(NA-N^3)X_1-14NBX_2-2BX_4].
\end{eqnarray*}

We use a filtration by degree $\mathcal{F}= \{ E^j\}_{j \in \N_0}$ of $\Equiv(\Z^4)$ where
$E^{j}\backslash E^{j+1}$ is the set of germs in $\Equiv(\Z^4)$ with all coordinates 
homogeneous polynomials of the same degree $j$ and
$E^0 = \Equiv(\Z^4)$. 
Note that 
$E^{2j} = E^{2j+1}$ for all $j \geq 0$ and each $E^j$ is a finitely generated $\inv(\Z_4)$-module.
Moreover, denoting as $\mathcal{M}(\Z^4)$ the unique maximal ideal in $\Equiv(\Z^4)$, we have
$$
\mathcal{M}(\Z^4) . E^j \subset E^{j+1}.
$$
We show that $E^5 \subset T_{\Equiv(\Z^4)}(F_4)$ by showing that
$$
E^5 \subset T_{\Equiv(\Z^4)}(F_4) +\mathcal{M}(\Z^4) E^5
$$
and invoking Nakayama's Lemma. We have that $E^5$ is generated over $\inv(\Z_4)$ as
\begin{equation}\label{generators}
E^5  = \left< N^2X_i, AX_i, BX_i, NX_j, AX_j, BX_j \right>, \; \; i=1,2; \; j=3,4.
\end{equation}
We point out that there are no equivariants of degree $6$ and therefore $E^6$ contains germs of degree $7$ or higher.

Multiply by $N$ the lower order generators of $T_{\Equiv(\Z^4)}(F_4)$, that is, $(dF_4)X_1$, $(dF_4)X_2$, $S_1F_4$ and $T_1F_4$ and append $AS_1F_4$ at the end of the list;
add or subtract as necessary terms in $\mathcal{M}(\Z^4) E^5$
to the generators of $T_{\Equiv(\Z^4)}(F_4)$. After performing these two operations, we obtain the matrix $Q$ below, where the entry $(i,j)$ is the coefficient of generator $j$ in (\ref{generators}) coming from the term $i$ in the list of generators of $T_{\Equiv(\Z^4)}(F_4)$:
$$
Q = \left( \begin{array}{cccccccccccc}
0 & 0 & 0 & 3 & 0 & 0 & 0 & 0 & 0 & 1 & 0 & 0 \\
1 & 0 & 0 & 0 & 0 & 0 &-1 & 0 & 0 & 0 & 0 & 0 \\
0 & 0 & 0 & 1 & 2 & 0 & 0 & 0 & 0 &1& -2/3& 0 \\
3 & 6 & 0 & 0 & 0 & 0 & -9 & 2 & 0 & 0 & 0 & 0 \\
0 & 0 & 0 & 3 & 0 & 0 & 0 & 0 & 0 & 1 & 0 & 0 \\
0 & 0 & -3 & 0 & -1 & 0 & 0 & 0 & 0 &1 & 0 & 0 \\
0 & 0 & 0 & 1 & 0 & 0 & 0 & 0 & 0 & -1 & 0 & 0 \\
0 & 0 & 0 & 0 & 0 & 0 & 0 & 0 &1/8 & 0 & 0 & 0\\
3 & 0 & 0 & 0 & 0 & 0 & 1 & 0 & 0 & 0 & 0 & 0 \\
 0 & 0 & 0 & 0 & 0 & -1 &  0 & 0 & 0 & 0 & 0 & 0 \\
-1/4 & 1/4  & 0 & 0 & -1 &  0 & 0 & 0 & 0 & 0 & 0 &0\\
0 & 0 & 0 & 0 & 0 & 0 & 0 & 0 & 0 & 0 & 0 & -2\\
0 & 0 & 0 & 0 & 0 & 0 & 0 & 0 & 0 & 0 & 1 & 0
\end{array} \right)
$$
The matrix $Q$ is of rank $12$, establishing our claim that $E^5 \subset T_{\Equiv(\Z^4)}(F_4)$.

We can then simplify the generators of $T_{\Equiv(\Z^4)}(F_4)$ even further adding the elements in 
$T_{\Equiv(\Z^4)}(F_4)\cap E^3\backslash E^5$:
$$
NX_1, X_3, 3NX_2+X_4.
$$
It is easily seen that there are the following two choices for a complement to $T_{\Equiv(\Z^4)}(F_4)$ inside $\Equiv(\Z^4)$
$$
V_1 = \{ X_1,  X_2, X_4 \} \quad \mbox{  and   } \quad V_2 = \{ X_1,  X_2, NX_2 \}.
$$
Therefore, the $\Z_4$-equivariant codimension of $F_4$ is $3$. A universal unfolding is given by
$$
G_4(x,y,\alpha, \beta, \delta) = F_4(x,y) + \alpha X_1 + \beta X_2 + \delta NX_2.
$$
Of course a choice using $V_1$ as a complement is just as good from the point of view of singularity theory. However, our choice yields better results for the construction of an example with symmetry $\Z_n$.
\end{document}